\numberwithin{equation}{section}
\newtheorem{thm}{Theorem}[section]
\newtheorem{lem}[thm]{Lemma}
\newtheorem{prob}[thm]{Problem}
\newcommand{\be}{\begin{equation}}
\newcommand{\ee}{\end{equation}}
\newcommand{\bochner}{{\mathcal B}}
\newcommand{\pettis}{{\mathcal P}}
\newcommand{\nat}{{\mathbb N}}
\newcommand{\reals}{{\mathbb R}}
\newcommand{\2}{\{0,1\}}
\newcommand{\infseq}{\2 ^{\infty}}
\newcommand{\finseq}{\2 ^{< \infty}}
\newcommand{\n}{\bold n}
\newcommand{\fn}{f(\n)}
\newcommand{\B}{{\bold B}}
\newcommand{\D}{{\bold D}}
\newcommand{\nd}{\bold {ND}}
\begin{document}

\title{Lineability of non-differentiable Pettis primitives}
\author{  B. Bongiorno}
\address{Department of Mathematics, University of Palermo, Via Archirafi 34, 90123 Palermo, Italy}
\email{bbongi@math.unipa.it}
\author{U. B. darji}
\address{Department of Mathematics, University of Louisville,  Louisville,  KY 40292, USA}
\email{ubdarj01@louisville.edu}
\author{L. Di Piazza}
\address{Department of Mathematics, University of Palermo, Via Archirafi 34, 90123 Palermo, Italy}
\email{dipiazza@math.unipa.it}
\date{}
\keywords{Pettis Integral, nowhere differentiable, Dvoretzky's theorem, lineable, spaceable}
\subjclass[2010]{Primary: 46G10, 28B05 Secondary: 15A03}
\thanks{The second author would like to thank the hospitality of the department of Mathematics of University of Palermo and grant Cori 2013 of the University of Palermo}
\maketitle

\begin{abstract}Let $X$ be an infinite-dimensional Banach space. In 1995, settling a long outstanding problem of Pettis, Dilworth and Girardi constructed an $X$-valued Pettis integrable function on $[0,1]$ whose primitive is nowhere weakly differentiable. Using their technique and some new ideas we show that $\nd$, the set of strongly measurable Pettis integrable functions with nowhere weakly differentiable primitives, is lineable, i.e., there is an infinite dimensional vector space whose nonzero vectors belong to $\nd$.
\end{abstract}
\section{Introduction}
Throughout this note $X$ is an infinite dimensional Banach space. For $X$-valued functions there are essentially two distinct notions of integration: the Bochner  integral and the Pettis integral. The latter one includes properly the Bochner integral and preserves some of its good properties, e.g, countable additivity and absolute continuity of the indefinite integral, and convergence theorems.  For $X$-valued functions on $[0,1]$ the Bochner indefinite integral is almost everywhere differentiable.  As Pettis himself pointed out  \cite{pettis},  the same property is not enjoyed by the Pettis integral.  An  interesting  problem left open  in  \cite{pettis} was whether the indefinite Pettis integral of a strongly measurable Pettis  integrable function $f$ is
 almost everywhere weakly differentiable (that is, does there exist a set  $E \subset[0,1]$ of full measure such that  $\int_0^tx^*f$ is differentiable to $x^*f$ for each $t\in E$ and for each $x^*$ in the dual of $X$).

 After a string of partial results beginning 1940 (\cite{ph}, \cite{munroe}, \cite {dg1}, \cite{kadets}),   this problem was settled decisively and beautifully in 1995 by Dilworth and Girardi \cite{girardi}. They exhibited that every infinite dimensional Banach-space admits a Pettis integrable function from $[0,1]$ into $X$ whose primitive is nowhere weakly differentiable. Their proof is rather flexible and gives the impression that there are many such functions. How does one make such a statement precise?  One possibility is to show that in the space of all strongly measurable function from $[0,1]$ into $X$, the set of Pettis integrable functions with nowhere weakly differentiable primitive is a dense $G_{\delta}$ set. This was done by Popa  \cite{popa} in 2000. The topology one uses in this setting is the topology generated by the Pettis norm. The shortcoming of this method is that the Pettis norm is not complete \cite{dg1}. Hence, proving that the set of Pettis integrable functions with nowhere weakly differentiable primitive is a dense $G_{\delta}$ set loses some of its significance.

 An alternate notion of bigness in Banach space was introduced by Gurarity \cite{g} and followed up in \cite{a1} \cite{a2} \cite{gq} . This notion of bigness is of algebraic nature. If $X$ is a Banach space then a subset $M$ of $X$ is {\it lineable} if $M\cup \{0\}$ contains an infinite dimensional vector space. If, moreover, this infinite dimensional vector space is closed in the norm topology, then $M$ is said to be {\it spaceable}. During the last twenty years many classical, pathological subsets of Banach spaces have been shown to be lineable or spaceable. What is surprising is that most of these sets are far from being vector spaces. For example,  Gurarity \cite{g2}
 showed that the space of continuous nowhere differentiable functions on $[0,1]$ is lineable. The spaceability of this set  was shown in \cite{fgk}. Roder\'iguez-Piazza  \cite{r} showed that every separable Banach space is isometric to a subspace of $C[0,1]$ whose every non-zero element is nowhere differentiable. Later this result was strengthened by Hencl  \cite{h} who showed that one can replace ``nowhere differentiable" by ``nowhere approximately differentiable and nowhere H\"older function".  More recently, the spaceability of nowhere integrable functions was shown by Glab, Kaufman and Pellegrini \cite{gkp}. For a survey of results concerning lineability and spaceability as well as many interesting results and open problems, we refer the reader to the recent paper by Enflo et al. \cite{enflo}.

In this paper we study, from the viewpoint of lineability, the set of strongly measurable Pettis integrable functions whose Pettis integral is nowhere weakly differentiable. We show (Theorem \ref{mainthm})  that for  every infinite dimensional Banach-space $X$ there  is a linearly independent set  ${\mathcal V}$ of  strongly measurable Pettis integrable $X$-valued functions, satisfying the property  that ${\mathcal V}$ has  the cardinality of the continuum and for each sequence $\{\lambda_k\} \in \ell_1$ and each  sequence of $\{f_k\}$ in ${\mathcal V}$, the function
$ f= \sum_{k=1}^{\infty} \lambda_k f_k$ is Pettis integrable and its primitive is nowhere weakly differentiable, provided that it is not the zero function. Hence, the lineability of $\nd$ follows. Our techniques  use ideas of  Dilworth and Girardi as well as some interesting applications of the Dvoretzky's theorem  and some basic tools of infinite combinatorics. The following problems remain open.

\begin{prob} Is $\nd$ spaceable?
\end{prob}
If the answer to the above questions is yes, can more be shown?
\begin{prob} Let $X$ be an infinite dimensional Banach space. Can every separable Banach space be isometrically embedded into $\nd \cup \{0\}$?
\end{prob}

The paper is organized as follows: Section 2 contains definitions and some basic facts concerning vector-valued integration. The main result of this article, Theorem \ref{mainthm}, is also stated there. The rest of the paper concerns the proof of this theorem.  We have decided to give proof of the main results in two parts. In Section~3 we give a proof in the case when $X= \ell_2$.  The reason for giving a separate proof for $\ell_2$ is that this case does not use Dvoretzky's Theorem. Hence, the combinatorics and the estimates are easier to follow. Building up on the notation, ideas and  techniques of Section~3, in Section~4 we give the proof  of the general case.

\section{Basic facts}

Throughout $X$ is a  infinite dimensional Banach space and $X^*$ is its dual space. $[0,1]$ is the unit interval of the real line equipped with  the Lebesgue measure $\lambda$.   By $\mathcal{L}$ we denote  the family of all Lebesgue measurable subsets of $[0,1]$.

We recall that a strongly measurable function $f:[0,1] \rightarrow X$ is said to be {\it Bochner integrable} in $[0,1]$ if $\int_0^1 ||f|| < \infty$. A   function $f:[0,1] \rightarrow X$ is said to be {\it  Pettis integrable} in $[0,1]$ if $x^*f \in L_1$ for all $x^* \in X^*$ and for each $E \in \mathcal{L}$ there exists a vector $x_E \in X$ such that $x^*(x_E) = \int_E x^*f $, for  all $x^* \in X^*$. We write $x_E = (P)\int_E f $ and we call it the {\it Pettis integral of } $f$ over $E$. Moreover we call   {\it Pettis primitive  } or simple   {\it  primitive  } of $f$ the function $F(t) = (P)\int_0^t f$. We refer the reader to \cite{du} for basic theory of vector-valued integration.

 We use $\pettis$ (resp.  $\bochner$)  to denote the set of all strongly measurable Pettis   integrable  (resp. Bochner integrable)  functions from $[0,1]$ into $X$. Recall that $\bochner \subsetneq \pettis$.

We have the following basic fact.
\begin{lem}{\rm (see \cite[Corollary 5.1]{mu})}\label{basic1} Let $f:[0,1] \rightarrow X$ be defined by
\[ f = \sum_{k=1}^{\infty} x_k \chi_{A_k}\]
where $\{A_k\}$  is a sequence of pairwise disjoint sets of  $\mathcal{L}$  and $\{x_n\}$ a sequence in $X$.
\begin{enumerate}
\item $f \in \bochner $ iff $\sum_{k=1}^{\infty} \| x_k\| \lambda(A_k) < \infty$.
\item $f  \in \pettis $ is Pettis integrable iff $\sum_{k=1}^{\infty} x_k \lambda(A_k) $ is unconditionally
convergent in $X$.
\end{enumerate}
Moreover, if either of the integrals exists, then it equals $\sum_{k=1}^{\infty} x_k \lambda(A_k) $.
\end{lem}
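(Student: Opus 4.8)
The statement to prove is Lemma \ref{basic1}: for $f = \sum_k x_k \chi_{A_k}$ with $\{A_k\}$ pairwise disjoint measurable sets and $\{x_k\}$ a sequence in $X$, we have (1) $f$ is Bochner integrable iff $\sum_k \|x_k\|\lambda(A_k) < \infty$, and (2) $f$ is Pettis integrable iff $\sum_k x_k \lambda(A_k)$ is unconditionally convergent, with the integral equal to that sum in either case.

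Let me sketch how I'd prove this.

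For part (1), Bochner integrability of a strongly measurable function $f$ is equivalent to $\int_0^1 \|f\| < \infty$. Since $\|f\| = \sum_k \|x_k\| \chi_{A_k}$ (the $A_k$ are disjoint), the monotone convergence theorem gives $\int_0^1 \|f\| = \sum_k \|x_k\| \lambda(A_k)$. That's basically it for the characterization. For the value of the integral, use that Bochner integration is linear and continuous with respect to the $L_1$-norm: the partial sums $f_n = \sum_{k=1}^n x_k\chi_{A_k}$ converge to $f$ in Bochner norm (since $\int\|f - f_n\| = \sum_{k>n}\|x_k\|\lambda(A_k) \to 0$), and each $\int f_n = \sum_{k=1}^n x_k\lambda(A_k)$, so $\int f = \sum_k x_k\lambda(A_k)$.

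For part (2), the content is connecting Pettis integrability to unconditional convergence. First note that $f$ is Pettis integrable iff for every $x^* \in X^*$, $x^*f = \sum_k x^*(x_k)\chi_{A_k} \in L_1$ and the integrals are represented by a vector. Since $\int_E x^*f = \sum_{k} x^*(x_k)\lambda(A_k \cap E)$, the natural candidate for the integral over $E$ is $\sum_k x_k \lambda(A_k\cap E)$. The key point is a theorem of Orlicz--Pettis type: a series $\sum_k y_k$ in a Banach space is unconditionally convergent iff it is weakly unconditionally convergent and the set function $E \mapsto \sum_{k}$ (restricted sums) behaves well; more precisely, unconditional convergence of $\sum_k x_k\lambda(A_k)$ is exactly what guarantees that $x_E := \sum_k x_k\lambda(A_k\cap E)$ exists as a genuine element of $X$ for every $E$, with $x^*(x_E) = \int_E x^*f$. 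Conversely, if $f$ is Pettis integrable, taking $E$ to range over unions of subfamilies of the $A_k$ forces $\sum_k x_k\lambda(A_k)$ to be unconditionally convergent. Since the lemma is cited from \cite[Corollary 5.1]{mu}, I would lean on the Orlicz--Pettis theorem (weak subseries convergence implies norm subseries convergence) to bridge the weak representation and the norm-level unconditional convergence.

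The main obstacle, and the crux of the argument, is part (2): showing the equivalence between Pettis integrability and \emph{unconditional} (rather than merely ordinary) convergence of $\sum_k x_k\lambda(A_k)$. The subtlety is that Pettis integrability demands a representing vector $x_E$ for \emph{every} measurable $E$, not just for $E = [0,1]$; choosing $E$ to be the union over an arbitrary subset of indices extracts subseries convergence, and the Orlicz--Pettis theorem upgrades the weak convergence (coming from $x^*f \in L_1$) to norm convergence, yielding unconditional convergence. Conversely, unconditional convergence of $\sum_k x_k\lambda(A_k)$ implies $\sum_k x_k\lambda(A_k\cap E)$ converges in norm for every $E$ (its terms being a pointwise-dominated rearrangement-stable subseries), and this limit serves as the required Pettis integral $x_E$. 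Since this is quoted as a known result, I would simply invoke the reference while indicating the Orlicz--Pettis mechanism.
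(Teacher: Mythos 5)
Your proposal is correct, and there is nothing in the paper to compare it against at the level of detail you provide: the paper does not prove this lemma at all, it simply cites it as Corollary 5.1 of Musial's survey. Your sketch is essentially the standard argument behind that citation. Part (1) via $\|f\| = \sum_k \|x_k\|\chi_{A_k}$, monotone convergence, and $L_1$-convergence of the partial sums is complete as stated. In part (2), the forward direction (take $E = \bigcup_{k \in S} A_k$ for an arbitrary index set $S$ to get weak subseries convergence, then upgrade to norm convergence by Orlicz--Pettis) is exactly right. The one place you should tighten the converse: the phrase ``pointwise-dominated rearrangement-stable subseries'' is doing real work and should be replaced by two named facts about unconditionally convergent series, namely (i) the bounded multiplier theorem, applied with $t_k = \lambda(A_k \cap E)/\lambda(A_k) \in [0,1]$, which gives norm convergence of $x_E = \sum_k x_k \lambda(A_k \cap E)$ for every measurable $E$ (not a subseries, since $\lambda(A_k\cap E)$ need not equal $0$ or $\lambda(A_k)$), and (ii) the weak unconditional Cauchy property $\sum_k |x^*(x_k)|\lambda(A_k) < \infty$, which is what guarantees $x^*f \in L_1$ and justifies the term-by-term identity $x^*(x_E) = \int_E x^*f$. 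With those two citations made explicit, your argument is a complete proof of the lemma.
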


The following is the  main theorem of this article.

\begin{thm}\label{mainthm} There exists a set ${\mathcal V} \subset \pettis$ such that
\begin{enumerate}
\item ${\mathcal V}$ is the size of the cardinality of the continuum,
\item ${\mathcal V}$ is linearly independent,
\item for each sequence $\{\lambda_k\} \in \ell_1$ and a sequence of $\{f_k\}$ in ${\mathcal V}$
we have that
\[ f= \sum_{k=1}^{\infty} \lambda_k f_k
\] is in $\pettis$, and
\item  moreover, if $f$ is not the zero function, then the primitive  $F$ of $f$,  has the property that for all $x \in [0,1]$, we have
\[ \limsup_{h\rightarrow 0} \left \| \frac{F(x+h)-F(x)}{h} \right \|= \infty
\]
\end{enumerate}
\end{thm}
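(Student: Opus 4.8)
The plan is to build $\mathcal{V}=\{f_\alpha:\alpha\in A\}$ from a fixed almost disjoint family $\{G_\alpha\}_{\alpha\in A}$ of infinite subsets of $\N$ with $|A|$ equal to the continuum, where the index $n$ plays the role of a \emph{generation}, i.e.\ the dyadic scale $2^{-n}$. I fix a sequence $c_n>0$ with $2^nc_n\to\infty$ and $\sum_n 2^{n/2}c_n<\infty$ (for instance $c_n=n2^{-n}$): the first condition forces large difference quotients, the second guarantees Pettis integrability. In the model case $X=\ell_2$ I attach to each dyadic interval $I$ a distinct orthonormal vector $e_I$ and set, for $n\in G_\alpha$ and each dyadic $I$ of generation $n$, $f_\alpha=+2^{n+1}c_n e_I$ on the left half of $I$ and $-2^{n+1}c_n e_I$ on the right half, while for $n\notin G_\alpha$ the function $f_\alpha$ contributes nothing at generation $n$. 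The $e_I$-coordinate of the primitive is then $c_n$ times a triangular wave $g_I$ of amplitude $1$ supported on $I$, so $F_\alpha(t)=\sum_{n\in G_\alpha}\sum_I c_n g_I(t)e_I$. Properties (1) and (2) are then immediate: for distinct $\alpha_1,\dots,\alpha_m$, almost disjointness yields infinitely many $n\in G_{\alpha_1}\setminus\bigcup_{j\ge2}G_{\alpha_j}$, and at such a generation only $f_{\alpha_1}$ is nonzero, so no nontrivial finite combination can vanish.

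For (3), writing $f=\sum_k\lambda_k f_{\alpha_k}$ with $\lambda\in\ell_1$ collapses coordinatewise to $F(t)=\sum_n\mu_n\sum_I c_n g_I(t)e_I$, where $\mu_n=\sum_{k:\,n\in G_{\alpha_k}}\lambda_k$ satisfies $|\mu_n|\le\|\lambda\|_1$. Expressing $f$ in the block form of Lemma \ref{basic1}, the generation-$n$ blocks contribute vectors of norm $\mu_n c_n$ in the $2^n$ orthogonal directions $e_I$; grouping the series by generation and bounding the supremum over signs within generation $n$ by a constant times $\|\lambda\|_1\,c_n 2^{n/2}$, the summability of $\sum_n 2^{n/2}c_n$ forces unconditional convergence of $\sum_k x_k\lambda(A_k)$, so $f\in\pettis$ by Lemma \ref{basic1}. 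For (4), let $k_0$ be least with $\lambda_{k_0}\ne0$; choosing $K$ with $\sum_{k>K}|\lambda_k|<|\lambda_{k_0}|/2$ and using almost disjointness to obtain infinitely many $n\in G_{\alpha_{k_0}}$ avoiding $G_{\alpha_k}$ for $k\le K$, $k\ne k_0$, gives $|\mu_n|\ge|\lambda_{k_0}|/2$ for infinitely many $n$. For such an $n$ and any $x$, let $I$ be the generation-$n$ interval containing $x$ and take $h$ small inside the half of $I$ where $g_I$ has slope $\pm2^{n+1}$; then the $e_I$-coordinate of $(F(x+h)-F(x))/h$ equals $\mu_n c_n 2^{n+1}$, so orthogonality gives the norm at least $\tfrac12|\lambda_{k_0}|\,2^{n+1}c_n\to\infty$ as $|h|\to0$, which is (4).

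The only place orthonormality is used is the lower bound, so for a general infinite-dimensional $X$ I would, following Dilworth and Girardi, replace the $e_I$ by vectors drawn from finite-dimensional $(1+\e_n)$-Euclidean subspaces $E_n\subset X$ furnished by Dvoretzky's theorem, with $\dim E_n=2^n$ and $\e_n\downarrow0$, chosen successively so that $\{E_n\}$ is a Schauder decomposition of its closed span with uniformly bounded projections $P_n$. The generation-$n$ oscillation is placed inside $E_n$; the almost-Euclidean estimate inside $E_n$ reproduces both the sign-supremum bound needed for Pettis integrability and the coordinate lower bound, while applying $P_n$, which annihilates every other generation exactly, isolates the generation-$n$ difference quotient and eliminates cross-scale cancellation. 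The almost disjoint family $\{G_\alpha\}$ again supplies the continuum-size linear independence and the control of the coefficients $\mu_n$ in an arbitrary $\ell_1$-combination, exactly as in the $\ell_2$ case.

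The main obstacle is the general case: one must build the almost-Euclidean Schauder decomposition $\{E_n\}$ with uniformly controlled decomposition constant while \emph{simultaneously} respecting the Pettis summability budget $\sum_n 2^{n/2}c_n<\infty$ and the divergence $2^nc_n\to\infty$, and then make every estimate uniform across all of $\ell_1$ so that a single choice of generations witnesses the blow-up for every nonzero combination at once. The $\ell_2$ argument is a faithful blueprint precisely because the bounded projections $P_n$ play the role of the orthogonal coordinate projections, so the delicate work is confined to producing the decomposition and transferring the two competing inequalities through the $(1+\e_n)$ distortions.
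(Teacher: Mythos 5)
There is a fatal gap at the very first step: your $f_\alpha$ is not an $X$-valued function at all, so it cannot lie in $\pettis$. Every point $x\in[0,1]$ lies in a generation-$n$ dyadic interval for \emph{every} $n$, so each $n\in G_\alpha$ contributes a term $\pm 2^{n+1}c_n e_{I_n(x)}$ to the value $f_\alpha(x)$, and these vectors are orthonormal across generations; hence
\[ \|f_\alpha(x)\|^2=\sum_{n\in G_\alpha}\bigl(2^{n+1}c_n\bigr)^2=\infty \]
at every point, since your blow-up condition $2^nc_n\to\infty$ forces the terms to diverge. Equivalently, your appeal to Lemma~\ref{basic1} is illegitimate: that lemma requires the sets $A_k$ to be \emph{pairwise disjoint}, whereas your half-intervals from different generations are nested. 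The generation-$n$ block of integrals $x_k\lambda(A_k)$ indeed has norm about $|\mu_n|c_n2^{n/2}$ (damped by the measure factor $2^{-n-1}$, hence summable under $\sum_n 2^{n/2}c_n<\infty$), but the pointwise values $\|x_k\|\sim 2^{n}c_n$ carry no such damping, and with overlapping supports the pointwise series itself must converge. What your formula does define is the continuous map $F_\alpha(t)=\sum_{n\in G_\alpha}\sum_I c_n g_I(t)e_I$, which is nowhere differentiable --- but it is not the Pettis primitive of anything: if $F_\alpha(t)=\int_0^t f$, the coordinates would force $\langle f(x),e_{I_n(x)}\rangle=\pm 2^{n+1}c_n$ for a.e.\ $x$ and all $n\in G_\alpha$, again giving $\|f(x)\|=\infty$. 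So you have reproduced the classical Gurariy-type lineability of nowhere differentiable \emph{continuous} functions, not the theorem at hand.

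Nor can this be repaired by re-tuning $c_n$: with full-support tents, pointwise finiteness forces $\sum_n(2^nc_n)^2<\infty$, which makes $f$ bounded by $\|\lambda\|_1\bigl(\sum_n(2^{n+1}c_n)^2\bigr)^{1/2}$, hence Bochner integrable, hence its primitive is a.e.\ differentiable --- the exact opposite of conclusion (4). The idea you are missing, which is the heart of the Dilworth--Girardi construction and of the paper, is to \emph{disjointify the supports}: fix pairwise disjoint nowhere dense closed sets $A(\sigma,i)\subseteq I_\sigma$ of positive measure, one per node of the dyadic tree, and use terms $c(\sigma,i)\,\lambda(A(\sigma,i))^{-1}e(\sigma,i)\chi_{A(\sigma,i)}$. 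Then at each point at most one term of the series is nonzero, so $f$ is well defined and pointwise finite (while unbounded, as it must be), yet the integral over a dyadic interval $I_\tau$ of the part subordinate to $\tau$ still collects the \emph{full} vectors $c(\sigma,i)e(\sigma,i)$ for all $\sigma$ extending $\tau$, of norm about $2^{-|\tau|/2}\bigl(\sum_{k\ge|\tau|}(k+1)^{-2}\bigr)^{1/2}$, which is huge compared with $|I_\tau|=2^{-|\tau|}$. Your remaining machinery --- the almost disjoint family, the collapsed coefficients $\mu_n$, the bound $|\mu_n|\ge|\lambda_{k_0}|/2$, and the Dvoretzky outline for general $X$ --- runs parallel to the paper's bookkeeping (the paper uses almost disjoint branches $\n_t$ through the index set $\{(\sigma,i):0\le i\le|\sigma|\}$ and blocks along a basic sequence), but without the disjoint-support mechanism both your $\ell_2$ case and your general case collapse at the definition of $\mathcal V$.
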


\vspace{2ex}

\section{The case $X =\ell_2 $}
The proof  in the case of  $X =\ell_2 $ is simpler and does not make a  use Dvoretzky's theorem.
We present this proof first to give the general idea. We will then prove the general case.\\

 We now introduce some terminology and notation
necessary for the proof.\\

By  $\infseq$,  $\finseq$ we denote the set of all infinite and
the set of all finite (including the empty) sequences of $\2$, respectively.  For each $\sigma \in
\finseq$, we let $|\sigma|$ denote the length of $\sigma$ and $\sigma i$, $i \in \2$, denote the extension of $\sigma$ by $i$.
If $\sigma \in \finseq \cup \infseq$, and $|\sigma| \ge i$, then $\sigma_{|i}$ denotes the restriction of $\sigma$ to
the first $i$ terms. If $\tau \in \finseq$, then $[\tau]$ denotes the set of $\sigma \in \finseq$ which are extension of
$\tau$, namely the set of all $\sigma \in \finseq$ such that $\sigma_{|i} =\tau$ where $i = | \tau|$.
We let
\[ \B =\{(\sigma, i)| \sigma \in \finseq, 0 \le i \le |\sigma| \}.
\]

For each $\sigma \in \finseq$, we define a closed interval $I_{\sigma} \subseteq [0,1]$ of length $2^{-|\sigma|}$
recursively in the following fashion: If $\sigma$ is the empty sequence, then $I_{\sigma} = [0,1]$. In general, if $I_{\sigma}$ is defined then
$I_{\sigma 0}$ is the left half of $I_{\sigma}$ and $I_{\sigma 1}$ is the right half of $I_{\sigma}$.\\

Using the fact that there are nowhere dense sets of positive measure, we obtain a collection
$\{ A(\sigma, i): (\sigma, i) \in \B\}$ such that the following conditions hold:
\begin{itemize}
\item each $ A(\sigma, i)$ is a closed subset of $I_{\sigma}$,
\item $\lambda( A(\sigma,i) )>0$, and
\item  if $(\sigma,i) \neq (\tau, j)$, then $A(\sigma,i) \cap A(\tau,j) = \emptyset$.\\
\end{itemize}

We also enumerate the standard orthonormal basis of $\ell_2$  as $\{e(\sigma,i):
(\sigma, i) \in \B\}$ so the following conditions hold:
\begin{itemize}
\item each of $e(\sigma, i)$ is of the form $(0,\ldots, 1, 0, \ldots)$, and
\item $e(\sigma, i) \perp e(\tau,j)$ if $(\sigma,i) \neq (\tau, j)$.\\
\end{itemize}

Let $c(\sigma,i) \in \reals$ for $(\sigma, i) \in \B$. Then, using these coefficients we define
a special type of function as below:
\[f= \sum_{(\sigma,i) \in \B} c(\sigma,i) \cdot \frac{1}{\lambda(A(\sigma, i))} \cdot e(\sigma, i)\cdot\chi_{A(\sigma, i)}.\]
Functions of these type will be called {\it basic functions.}

If $\tau \in \finseq$ and $f$ is as above, we define
\[f_{|\tau} = \sum_{(\sigma,i) \in \B, \ \sigma \in [\tau] } c(\sigma,i) \cdot \frac{1}{\lambda(A(\sigma, i))} \cdot e(\sigma, i)\cdot\chi_{A(\sigma, i)}.\]

We will  freely use the following facts about basic functions.
\begin{lem}\label{basic} Let
\[f= \sum_{(\sigma,i) \in \B} c(\sigma,i) \cdot \frac{1}{\lambda(A(\sigma, i))} \cdot e(\sigma, i)\cdot\chi_{A(\sigma, i)}.\]
Then,
\begin{enumerate}
\item $f \in \pettis$ iff  $\sum_{(\sigma,i) \in \B} c(\sigma , i) ^2  < \infty $.
Moreover, in the case that $f \in \pettis$, we have that
\[ \left \| \int_{[0,1]} f \right \| = \sqrt{\sum_{(\sigma,i) \in \B} c(\sigma , i) ^2 }
\]
\item If $I, J$ are subintervals of $[0,1]$ with $I \subseteq J$, then
\[  \left \| \int _{I} f \right \| \le \left \| \int _{J} f \right \| .
\]
\item If $\tau \in \finseq$, then
\[ \left \| \int _{I_{\tau}} f \right \|  \ge \left \|  \int _{I_{\tau}} f_{| \tau}  \right \|  =  \sqrt{ \sum_{(\sigma,i) \in \B, \ \sigma \in [\tau] } c(\sigma , i) ^2 }.\]
\end{enumerate}

\end{lem}
\begin{proof} $(1)$ follows from  Lemma \ref{basic1} and from the fact that the standard base in $l_2$ is unconditional (see \cite{singer}). $(2)$ and $(3)$ follow from the fact that
\[   \int _{I} f= \sum_{(\sigma,i) \in \B} c(\sigma,i) \cdot e(\sigma,i) \cdot \frac{\lambda(A(\sigma, i)\cap I)}{\lambda(A(\sigma, i))} \]

\end{proof}

We let
\[ \D = \{ \n| \n: \nat \rightarrow \nat, \n (i) \le i\}.
\]

For each $\n \in \D$, we define a special type of basic function. Namely, let
 $\fn: [0,1] \rightarrow \ell_2$ be defined as
\[ \fn = \sum_{k=0}^{\infty} \ \sum_{\sigma \in \2^{k}} \frac{1}{(k+1)2^{k/2}} \cdot \frac{1}{\lambda(A(\sigma, \n(k)))} \cdot e(\sigma, \n(k))\cdot\chi_{A(\sigma, \n(k))}. \]

\begin{lem}\label{integral exact} For each $\n \in \D$, we have that $\fn$ is Pettis integrable and for $\tau \in \2 ^i$, we have that
\[ \left  \| \int_{I_{\tau}}f(\n)_{| \tau}  \right  \|  = 2^{-i/2} \sqrt{\sum_{k=i}^{\infty}  \left [ \frac{1}{(k+1)^2}  \right ]}.
\]

\end{lem}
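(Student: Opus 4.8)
The plan is to recognize $\fn$ as a basic function, read off its coefficients $c(\sigma,i)$, and then apply the already-established Lemma~\ref{basic} directly; the entire argument reduces to two elementary computations together with a count of the extensions of $\tau$.

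First I would identify the coefficients. Comparing the definition of $\fn$ with the general form of a basic function, I see that for a sequence $\sigma$ with $|\sigma|=k$ the only nonzero coefficient is the one at $i=\n(k)$, namely
\[ c(\sigma,i) = \begin{cases} \dfrac{1}{(k+1)2^{k/2}} & \text{if } i = \n(k), \\ 0 & \text{otherwise,} \end{cases} \]
where $k=|\sigma|$. Since $\n \in \D$ satisfies $\n(k)\le k$, the pair $(\sigma,\n(k))$ indeed lies in $\B$, so this is a legitimate assignment. The crucial bookkeeping point is that for each fixed $\sigma$ there is exactly one nonzero coefficient, so any sum over $(\sigma,i)\in\B$ collapses to a sum over $\sigma\in\finseq$.

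For Pettis integrability I would invoke Lemma~\ref{basic}(1): $\fn\in\pettis$ iff $\sum_{(\sigma,i)\in\B}c(\sigma,i)^2<\infty$. Grouping the $2^k$ sequences $\sigma\in\2^k$, each contributing $(k+1)^{-2}2^{-k}$, gives
\[ \sum_{(\sigma,i)\in\B}c(\sigma,i)^2 = \sum_{k=0}^{\infty} 2^k\cdot\frac{1}{(k+1)^2\, 2^k} = \sum_{k=0}^{\infty}\frac{1}{(k+1)^2}<\infty, \]
which establishes integrability. For the norm formula I would then apply Lemma~\ref{basic}(3), which yields
\[ \left\| \int_{I_\tau} f(\n)_{|\tau} \right\| = \sqrt{\sum_{(\sigma,i)\in\B,\ \sigma\in[\tau]} c(\sigma,i)^2}. \]
The only new ingredient is a count: for $\tau\in\2^i$ and each length $k\ge i$, the number of $\sigma\in\2^k$ extending $\tau$ is $2^{k-i}$. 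Hence the sum becomes $\sum_{k=i}^\infty 2^{k-i}(k+1)^{-2}2^{-k} = 2^{-i}\sum_{k=i}^\infty (k+1)^{-2}$, and taking the square root gives the claimed $2^{-i/2}\sqrt{\sum_{k=i}^\infty (k+1)^{-2}}$.

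The argument presents no real obstacle, since everything of substance was packaged into Lemma~\ref{basic}. The only place demanding care is the combinatorial bookkeeping: recognizing that each $\sigma$ carries exactly one nonzero coefficient, and correctly counting the $2^{k-i}$ length-$k$ extensions of $\tau$ (with $k=i$ contributing the single extension $\tau$ itself). Both are routine, so I expect the proof to be short.
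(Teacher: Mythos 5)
Your proof is correct and is essentially the paper's own argument: the paper states Lemma~\ref{integral exact} without any proof at all, treating it as an immediate consequence of Lemma~\ref{basic}, and your write-up is exactly that omitted verification. Identifying the single nonzero coefficient $c(\sigma,\mathbf{n}(k))=\frac{1}{(k+1)2^{k/2}}$ for each $\sigma$ of length $k$, checking integrability via Lemma~\ref{basic}(1), and counting the $2^{k-i}$ length-$k$ extensions of $\tau$ in the equality of Lemma~\ref{basic}(3) is precisely the intended computation.
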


\begin{lem}\label{infinite sum} Let $\{\lambda_i\}$ be a sequence in $\ell_1$ and $\{f_i\}$ be such
that $f_i = f(\n_i)$ for some $\n_i \in \D$. Then,
\[ f= \sum_{i=1}^{\infty} \lambda_i f_i \in \pettis \ \ \ \  \  \mbox{  and   } \ \ \ \ \ \forall \tau \in \finseq, \ \ \   \left  \|  \int_{I _{\tau}} f_{| \tau}
 \right \| \le  \sum_{i=1}^{\infty}  | \lambda_i | \left \| \int_{I _{\tau}} f_{i| \tau}\right \|.\]
\end{lem}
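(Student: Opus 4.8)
The plan is to recognize $f=\sum_{i=1}^\infty \lambda_i f_i$ as a single basic function and then read off both conclusions from Lemma \ref{basic}. Each $f_i=\fn$ with $\n=\n_i$ is, by definition, the basic function whose coefficient at $(\sigma,j)\in\B$ with $\sigma\in\2^k$ equals $(k+1)^{-1}2^{-k/2}$ if $j=\n_i(k)$ and $0$ otherwise; denote it $c_i(\sigma,j)$. Since $|c_i(\sigma,j)|\le (k+1)^{-1}2^{-k/2}$ uniformly in $i$ and $\{\lambda_i\}\in\ell_1$, the series $c(\sigma,j):=\sum_{i=1}^\infty \lambda_i c_i(\sigma,j)$ converges absolutely for every $(\sigma,j)\in\B$. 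Because the sets $A(\sigma,j)$ are pairwise disjoint, a pointwise computation shows that $f$ coincides with the basic function having coefficients $c(\sigma,j)$, so Lemma \ref{basic} becomes applicable to $f$ as soon as we control $\sum_{(\sigma,j)}c(\sigma,j)^2$.

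First I would settle Pettis integrability through Lemma \ref{basic}(1). As $c(\sigma,j)$ does not depend on $\sigma$ but only on $k=|\sigma|$, summing over the $2^k$ sequences $\sigma\in\2^k$ gives
\[ \sum_{(\sigma,j)\in\B} c(\sigma,j)^2=\sum_{k=0}^\infty \frac{1}{(k+1)^2}\sum_{j=0}^{k}\Big(\sum_{i:\,\n_i(k)=j}\lambda_i\Big)^2. \]
For each fixed $k$ the values $\n_i(k)$ all lie in $\{0,\dots,k\}$ (since $\n_i\in\D$), so the index sets $\{i:\n_i(k)=j\}$ partition the indices; using $\sum_j a_j^2\le(\sum_j|a_j|)^2$ the inner double sum is bounded by $\big(\sum_i|\lambda_i|\big)^2$. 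Hence $\sum_{(\sigma,j)}c(\sigma,j)^2\le\big(\sum_i|\lambda_i|\big)^2\sum_{k=0}^\infty (k+1)^{-2}<\infty$, and Lemma \ref{basic}(1) gives $f\in\pettis$.

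For the inequality I would use the explicit integral formula recorded in the proof of Lemma \ref{basic}. Whenever $\sigma\in[\tau]$ we have $A(\sigma,j)\subseteq I_\sigma\subseteq I_\tau$, so restricting to $I_\tau$ turns every ratio $\lambda(A(\sigma,j)\cap I_\tau)/\lambda(A(\sigma,j))$ into $1$; thus $\int_{I_\tau}f_{|\tau}$ is the $\ell_2$-vector with coordinates $c(\sigma,j)$ ranging over $\sigma\in[\tau]$, and similarly $\int_{I_\tau}f_{i|\tau}$ has coordinates $c_i(\sigma,j)$. Substituting $c(\sigma,j)=\sum_i\lambda_i c_i(\sigma,j)$ and interchanging the order of summation yields $\int_{I_\tau}f_{|\tau}=\sum_{i=1}^\infty\lambda_i\int_{I_\tau}f_{i|\tau}$, and the triangle inequality then gives the asserted bound
\[ \left\|\int_{I_\tau}f_{|\tau}\right\|\le\sum_{i=1}^\infty|\lambda_i|\left\|\int_{I_\tau}f_{i|\tau}\right\|. \]

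The one step deserving genuine justification is this commutation of the integral with the infinite series, equivalently the interchange of the double summation over $i$ and over $\{(\sigma,j):\sigma\in[\tau]\}$; this is where I expect the only real work. It is legitimate because the tails are uniformly controlled: by Lemma \ref{integral exact} the norm $\big\|\int_{I_\tau}f_{i|\tau}\big\|$ depends only on $|\tau|$ and never exceeds $\big(\sum_{k\ge0}(k+1)^{-2}\big)^{1/2}$, so $\sum_i|\lambda_i|\big\|\int_{I_\tau}f_{i|\tau}\big\|<\infty$ and the series converges absolutely in $\ell_2$. With this in hand the rearrangement is valid, and everything else reduces to the routine coefficient bookkeeping carried out above.
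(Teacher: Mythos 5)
Your proof is correct, and it splits naturally into two halves that compare differently with the paper's. The Pettis integrability half is essentially identical to the paper's argument: you rewrite $f$ as a single basic function with coefficients $c(\sigma,j)=\frac{1}{(k+1)2^{k/2}}\sum_{\{i:\,\n_i(k)=j\}}\lambda_i$, use disjointness of the index sets $\{i:\n_i(k)=j\}$ together with $\sum_j a_j^2\le\bigl(\sum_j a_j\bigr)^2$ to bound the coefficient sum by $\bigl(\sum_i|\lambda_i|\bigr)^2\sum_k(k+1)^{-2}$, and invoke Lemma \ref{basic}(1); this is exactly the paper's computation. For the inequality, however, you take a genuinely different route: you establish the vector identity $\int_{I_\tau}f_{|\tau}=\sum_{i=1}^\infty\lambda_i\int_{I_\tau}f_{i|\tau}$ (justified by absolute convergence of the series in $\ell_2$, with uniform control of $\bigl\|\int_{I_\tau}f_{i|\tau}\bigr\|$ coming from Lemma \ref{integral exact}, and by matching coordinates against the orthonormal basis) and then apply the triangle inequality. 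The paper instead never interchanges sum and integral: it computes $\bigl\|\int f\bigr\|$ exactly as $\sqrt{\sum c(\sigma,j)^2}$ via Lemma \ref{basic}(1), bounds that by $\sum_j|\lambda_j|\sqrt{\sum_k(k+1)^{-2}}$, and recognizes the right-hand side through the exact value in Lemma \ref{integral exact}; it also only carries this out for $\tau$ the empty sequence, remarking that restriction to $I_\tau$ changes nothing, whereas you handle general $\tau$ directly. Your linearity-plus-triangle-inequality argument is the more robust one --- it makes no use of the Hilbert space structure beyond identifying coordinates, and it is in fact the argument the paper itself falls back on in the general Banach space case (Lemma \ref{infinite sum general}, where the second assertion is dismissed as following ``from the general theory of integration''); what the paper's $\ell_2$-specific computation buys is an exact expression for the norm of the integral rather than just an upper bound, which keeps all estimates in that section explicit.
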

\begin{proof} We will show that  $f $ is Pettis integrable and
\[ \left \| \int _{[0,1]}  f
 \right \| \le  \sum_{i=1}^{\infty}  | \lambda_i | \left \| \int_{[0,1]} f_i\right \|,
\]
as restricting the function to $I_{\tau}$ does not alter the basic computations.  We first note that
\[ f = \sum_{k=0}^{\infty} \sum_{\sigma \in \2^{k}}  \sum_{j=0}^{k} \frac{1}{(k+1)2^{k/2}} \cdot \frac{d(k, j)}{\lambda(A(\sigma, j))} \cdot e(\sigma, j)\cdot\chi_{A(\sigma, j)},
\]
where $d(k, j) = \sum_{\{i: \n_i(k) =j\}}\lambda_i$.
Hence, $ | d(k,j)| \le \sum_{\{i: \n_i(k)  =j\}} | \lambda_i |$.
Since $j \neq j'$ implies that $\{i: \n_i(k)= j\} \cap \{i: \n_i(k) =j'\} = \emptyset$, we have that
\[\sum_{j=0}^{k} |d(k,j)| ^2\le \sum_{j=0}^{k} \left (  \sum_{\{i: \n_i(k) =j\}} | \lambda_i | \right ) ^2 \le  \left ( \sum_{i=1}^{\infty} | \lambda_i  | \right ) ^2. \]
We note that $f$ is Pettis integrable provided that the series
\[\sum_{k=0}^{\infty} \sum_{\sigma \in \2^{k}}  \sum_{j=0}^{k} \frac{| d(k, j)| ^2 }{(k+1)^22^k} < \infty.
\]
Indeed,
\begin{eqnarray*}
\sum_{k=0}^{\infty} \sum_{\sigma \in \2^{k}}  \sum_{j=0}^{k} \frac{| d(k, j)| ^2 }{(k+1)^22^k}
& = & \sum_{k=0}^{\infty} \sum_{\sigma \in \2^{k}}  \frac{1}{(k+1)^22^k} \sum_{j=0}^{k} | d(k, j)| ^2 \\
& \le & \sum_{k=0}^{\infty} \sum_{\sigma \in \2^{k}}  \frac{1}{(k+1)^22^k}  \left ( \sum_{i=1}^{\infty} | \lambda_i | \right ) ^2 \\
& = &  \left ( \sum_{i=1}^{\infty} | \lambda_i | \right ) ^2 \sum_{k=0}^{\infty}  \frac{1}{(k+1)^2}   \\
& < & \infty.
\end{eqnarray*}
Hence we have that $f$ is Pettis integrable. Moreover, as  by Lemma \ref{integral exact},  $\|\int_{[0,1]}f_i\| = \sqrt{\sum_{k=0}^{\infty}  \frac{1}{(k+1)^2}}$ for all $i=1,2,...$, we have that
\begin{eqnarray*}
\left \|  \int_{[0,1]} f \right \| & =  & \left ( \sum_{k=0}^{\infty} \sum_{\sigma \in \2^{k}}  \sum_{j=0}^{k} \frac{| d(\sigma, j)| ^2 }{(k+1)^22^k} \right ) ^{1/2}\\
& \le &  \left ( \left ( \sum_{j=1}^{\infty} | \lambda_j | \right ) ^2 \sum_{k=0}^{\infty}  \frac{1}{(k+1)^2} \right) ^{1/2}\\
& = & \sum_{j=1}^{\infty} | \lambda_j |  \sqrt{ \sum_{k=0}^{\infty}  \frac{1}{(k+1)^2}} \\
& = & \sum_{j=1}^{\infty} | \lambda_j |  \left \|\int_{[0,1]} f_j  \right \|,
\end{eqnarray*}
 completing the proof.
\end{proof}
\begin{proof}({\bf of Theorem~\ref{mainthm} for $X =\ell_2 $.})
We first obtain a subfamily $\{ \n_t \}$ of $\D$, $0 < t < 1$, such that the following
condition holds:
\begin{eqnarray}
 s, t \in (0,1) \ \& \ s \neq t \implies  \{k: \n_s (k) = \n_{t} (k) \} \mbox { is finite. }  \label{eq: ad}
 \end{eqnarray}
This may be done in the following fashion. For each $0 < t < 1$, consider the line $L_t$ going through the
origin with slope $t$.  For each $k\in \nat$ choose $\n _t(k) \in  \nat \cap [0,k] $ so that $L_t(k) -1<  \n_t(k)<L_t(k)+1$.
Then, the collection $\{ \n_t\}$ has the desired property.\\

Let ${\mathcal V} = \{f_{\n_t}: t \in (0,1) \}$. We will now show  that ${\mathcal V}$ has the desired properties.\\

It is clear that ${\mathcal V}$ has the cardinality that of the continuum. By $(\ref{eq: ad})$ it follows that no non-trivial finite linear combination
of elements of  $\{ f_{\n_t} \}$ is the zero function. Hence, ${\mathcal V}$ is linearly independent. That ${\mathcal V}$
satisfies conclusion $(3)$ of the theorem follows from Lemma~\ref{infinite sum}.

Finally, to verify conclusion (4) of the theorem, let us show that if $f$ is not the zero function.
then  for all $x \in [0,1]$, we have
\[ \lim_{h\rightarrow 0} \left \| \frac{F(x+h)-F(x)}{h} \right \|= \infty, \]
where $F$ is the primitive of $f$.
As $f$ is not the zero function, we may assume, by renumeration,  that all of the $\{f_i\}$'s are distinct
and $\lambda_1 \neq 0$.   Moreover, we lose no generality by assuming
that $\lambda_1 =1$. Let $i_0$ be such that $\sum_{i=i_0}^{\infty} | \lambda_i| <  \frac{1}{2}$. Let $t_i$ be such that $f_i = f_{\n_{t_i}}$. Let $M >0$.
By our choice of $\{\n_{t_i}\}$, we have that
there exists a positive integer $l$ such that for all $k \ge l$, $\sigma \in \2 ^k$ and $1\le i < i' \le i_0$, we have
that $\n_i(k) \neq \n_{i'} (k)$ and hence $e(\sigma,n_i(k)) \perp e(\sigma, n_{i'}(k))$.  Choose $k_0$ so large so that $k_0 > l$ and $2^{i/2-3} \sqrt{ \sum_{k=i}^{\infty}  \frac{1}{(k+1)^2}} > M$  for all $i > k_0$. Let $\delta =2^{-k_0}$. Let $ h \in \reals $ be such
that $0 < |h| < \delta$. We wish to show that \[ \left \| \frac{ F(x+h)-F(x) }{h}  \right \| > M. \]
Without loss of generality, we may assume that $h>0$. Let $j$ be the smallest positive integer so that there is $\tau \in \2^j$ so that $I_{\tau} \subseteq [x, x+h]$. We note that $j> k_0$ and $h<4 \cdot 2^{-j}$.

\begin{eqnarray}
 \left  \| F(x+h) -  F(x)   \right \|  & = & \left \| \int _{[x,x+h]} f \right \|  \nonumber \\
\label{eq: basic1} & \ge & \left \| \int _{I_{\tau}} f \right \| \\
\label{eq: basic2} & \ge & \left \| \int _{I_{\tau}} f _{|\tau}\right \| \\
& = &  \left \| \int _{I_{\tau}} \sum_{i=1}^{i_0} \lambda_i f_{i|\tau} + \int _{I_{\tau}} \sum_{i=i_0}^{\infty} \lambda_i f_{i|\tau} \right \| \nonumber \\
\label{eq: triangle1} & \ge & \left \| \int _{I_{\tau}} \sum_{i=1}^{i_0} \lambda_i f_{i|\tau} \right \| - \left \|  \int _{I_{\tau}}\sum_{i=i_0}^{\infty} \lambda_i f_{i|\tau} \right \|\\
\label{eq: infinite sum4}& \ge & \left \| \int _{I_{\tau}} \sum_{i=1}^{i_0} \lambda_i f_{i|\tau} \right \| - \sum_{i=i_0}^{\infty} | \lambda_i | \left \|  \int _{I_{\tau}} f_{i|\tau} \right \|\\
\label{eq: almost disjoint} & \ge & \left \| \int _{I_{\tau}} f_{1|\tau} \right \|  - \sum_{i=i_0}^{\infty} | \lambda_i | \left \|  \int _{I_{\tau}} f_{i|\tau} \right \|\\
\label{eq: exact}& = & \left ( 1 - \sum_{i=i_0}^{\infty} | \lambda_i | \right ) 2^{-j/2} \sqrt{\sum_{k=j}^{\infty}  \left [ \frac{1}{(k+1)^2}  \right ]}
\end{eqnarray}
Let us give some explanations for the above inequalities. Let us first observe that $f$ is a basic function.  Inequalities  (\ref{eq: basic1}) and (\ref{eq: basic2}) above follow from
Lemma \ref{basic}. Inequality (\ref{eq: triangle1}) is simply the triangle inequality. Meanwhile, inequality (\ref{eq: infinite sum4}) follows from Lemma~\ref{infinite sum}. Inequality
(\ref{eq: almost disjoint}) holds because $j > k_0 >l$ which implies that for all $1 \le i < i' \le i_0$ we have that $e(\sigma,n_i(|\sigma|)) \perp e(\sigma, n_{i'}(|\sigma|))$ for all $\sigma$
extensions of $\tau$ and $(\sigma,k) \in \B$. Finally, estimate (\ref{eq: exact}) follows from Lemma~\ref{integral exact}
Now using the above estimate and the fact that $h<4 \cdot 2^{-j}$, we obtain that
\begin{eqnarray*}
 \left \| \frac{ F (x+h)-F (x) }{h}  \right \| & > & \frac {\left ( 1 - \sum_{i=i_0}^{\infty} | \lambda_i | \right ) 2^{-j/2} \sqrt{\sum_{k=j}^{\infty}  \left [ \frac{1}{(k+1)^2}  \right ]}}{4 \cdot 2^{-j}}\\
 & \ge &  2^{j/2-3} \sqrt{ \sum_{k=j}^{\infty}  \frac{1}{(k+1)^2}}\\
 & > & M.
\end{eqnarray*}
\end{proof}
\vspace{2ex}

\section{The general case}
The proof of the general case is similar to the $\ell_2$ case, the main difference being the lack of orthonormal basis in an arbitrary Banach space $X$. We use the notation and the terminology of the previous section with one exception:
$\{e(\sigma, i): (\sigma, i) \in \B\}$ will have to be constructed with the help of Dvoretzky's theorem. The calculations are also more involved.

\begin{lem}\label{basic sequence}
Let $\{b_n\}$ be a basic sequence in $X$. Then there is $K>1$ such that for all
$\lambda_0, \lambda_1 \ldots \in \reals$ for which $\sum_{i=0}^{\infty} \lambda_i b_i$ converges in $X$ we have that
\[ \left \| \sum_{i=0}^{\infty} \lambda_i b_i \right \| \ge  \frac{1}{K} \left \|  \sum_{i=k}^{l} \lambda_i b_i \right \| \mbox{ for all } k < l \in \nat  \cup \{  \infty \}.
\]
\end{lem}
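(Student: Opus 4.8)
The plan is to reduce the statement to the uniform boundedness of the canonical projections associated with the basic sequence $\{b_n\}$. Recall that, by definition of a basic sequence, there is a constant $C \ge 1$ (the basis constant) such that for every choice of scalars and all $m \le n$ one has $\left\| \sum_{i=0}^{m} \lambda_i b_i \right\| \le C \left\| \sum_{i=0}^{n} \lambda_i b_i \right\|$. Equivalently, the canonical projections $P_n$, defined on $\overline{\mathrm{span}}\{b_i\}$ by $P_n\!\left(\sum_{i} \lambda_i b_i\right) = \sum_{i=0}^{n} \lambda_i b_i$, are well-defined bounded linear operators with $\|P_n\| \le C$ for every $n$; I set $P_{-1} = 0$ for convenience.

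First I would write the block $\sum_{i=k}^{l}\lambda_i b_i$ as a difference of projections applied to $x := \sum_{i=0}^{\infty}\lambda_i b_i$. When $l < \infty$ this is $\sum_{i=k}^{l}\lambda_i b_i = P_l x - P_{k-1} x$, and the triangle inequality together with $\|P_l\|, \|P_{k-1}\| \le C$ yields $\left\|\sum_{i=k}^{l}\lambda_i b_i\right\| \le 2C\,\|x\|$. When $l = \infty$ the same idea gives $\sum_{i=k}^{\infty}\lambda_i b_i = x - P_{k-1} x$, whence $\left\|\sum_{i=k}^{\infty}\lambda_i b_i\right\| \le (1+C)\,\|x\| \le 2C\,\|x\|$. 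Taking $K = 2C$ (which satisfies $K > 1$ since $C \ge 1$) gives the asserted inequality uniformly in $k < l \in \nat \cup \{\infty\}$.

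There is essentially no serious obstacle here; the only point requiring care is that the bound on $\|P_n\|$ must be uniform in $n$, which is precisely the content of the basic-sequence hypothesis (as opposed to each $P_n$ merely being bounded), and that the $l = \infty$ case be handled by the complementary projection $I - P_{k-1}$ rather than by a difference of two finite-rank projections. I would also verify that the convergence hypothesis on $\sum_i \lambda_i b_i$ guarantees $x$ lies in the closed linear span on which the $P_n$ act, so that $P_l x$ and $P_{k-1} x$ are indeed the expected partial sums.
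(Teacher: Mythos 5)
Your proposal is correct and follows essentially the same route as the paper: both arguments reduce the lemma to the uniform boundedness of the canonical partial-sum projections associated with the basic sequence and express the block $\sum_{i=k}^{l}\lambda_i b_i$ as a difference of such projections applied to the full sum (with the identity playing the role of the projection when $l=\infty$). Your version merely makes the constant explicit ($K=2C$), which the paper leaves implicit.
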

\begin{proof} Let $Y$ be the subspace of $X$ generated $\{b_n\}$. Let $\pi_k: Y \rightarrow Y$
be the natural projection defined as
\[ \pi_k \left (\sum_{i=0}^{\infty} \lambda_i b_i \right ) = \sum_{i=0}^{k} \lambda_i b_i.
\]
($\pi_{\infty}$ is simply the identity map.)
It is well-known that $\pi_k : Y \rightarrow Y$ is a bounded linear map. Moreover, it is known that $\{\pi_k: k \in \nat \cup \{ \infty\} \} $ is uniformly bounded.  For each $k< l \in \nat \cup \{ \infty\}$, let
\[ q_{k,l} \left (\sum_{i=0}^{\infty} \lambda_i b_i \right ) = \sum_{i=k}^{l} \lambda_i b_i.
\]
Then, $q_{k,l} = \pi_l - \pi_{k-1}$. Hence, the family $\{q_{k,l}\}$ is uniformly bounded and the lemma follows.
\end{proof}

 Throughout this section $\{b_n\}$ and $K$ are as above in the Lemma~\ref{basic sequence}.\\

We now proceed to construct $\{e(\sigma, j): (\sigma, i) \in \B\}$.  Let $\{n_k\}_{k=0}^{\infty}$ be a strictly increasing
sequence such that $n_0=0$.  Let
 \[ \B _k=\{(\sigma, i): \sigma \in \2^{[n_k,n_{k+1})}, \ \ \ 0 \le i \le |\sigma|\}.
 \]
 We call $\B_k$ the $k^{th}$ block.

 The following lemma follows from Dvoretzky's  theorem (see \cite{dvoretzky}).
 \begin{lem}\label{choosingen} Let $\{n_k\}_{k=0}^{\infty}$, $\{m_k\}_{k=0}^{\infty}$ and $B_k$ be as above.
 There exits  $\{e(\sigma,i): (\sigma, i) \in B_k\}$ such that the following
 conditions hold:

 \[  \{e(\sigma,i): (\sigma, i) \in \B_k\} \subseteq span\{b_j: j \in [m_k, m_{k+1})\},
\]
\[    \forall \lambda(\sigma, i) \in \reals \ \ \ \ \  \sqrt {\sum _{(\sigma, i) \in \B_k} \lambda (\sigma,i)^2} \ge  \left \| \sum _{(\sigma, i) \in \B_k} \lambda (\sigma,i) e(\sigma, i) \right \| \ge \frac{1}{2}
 \sqrt {\sum _{(\sigma, i) \in \B_k} \lambda (\sigma,i)^2}.
\]
 \end{lem}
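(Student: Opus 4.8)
The plan is to prove the lemma one block at a time, using the finite-dimensional form of Dvoretzky's theorem to manufacture, inside each block-span, a family of vectors that is $2$-equivalent to the standard basis of a Euclidean space. Fix $k$ and set $N_k := |\B_k|$, the number of index pairs in the $k$th block. Since $\{b_n\}$ is a basic sequence, the vectors $\{b_j : j \in [m_k,m_{k+1})\}$ are linearly independent, so $W_k := \mathrm{span}\{b_j : j \in [m_k,m_{k+1})\}$ is a normed space of dimension exactly $m_{k+1}-m_k$. The task is then to produce $N_k$ vectors of $W_k$, labelled by the pairs $(\sigma,i) \in \B_k$, that obey the displayed two-sided estimate; doing this for every $k$ assembles the full family $\{e(\sigma,i): (\sigma,i)\in\B\}$.

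First I would invoke the finite-dimensional Dvoretzky theorem (see \cite{dvoretzky}) in the quantitative form: for each $\e \in (0,1]$ and each $n \in \nat$ there is an integer $d(n,\e)$ so that every real normed space of dimension at least $d(n,\e)$ contains an $n$-dimensional subspace whose Banach--Mazur distance to $\ell_2^{n}$ is at most $1+\e$. This is precisely where the latitude in the auxiliary sequence $\{m_k\}$ is spent: I would demand that the blocks be long enough, namely $m_{k+1}-m_k \ge d(N_k,1)$, which is always possible since $N_k$ is a fixed finite integer. Applying the theorem to $W_k$ with $\e=1$ yields an $N_k$-dimensional subspace $V_k \subseteq W_k$ and, after rescaling, a linear isomorphism $T_k : \ell_2^{N_k} \to V_k$ with
\[ \|x\|_2 \le \|T_k x\| \le 2\,\|x\|_2 \qquad \text{for all } x \in \ell_2^{N_k}. \]

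Next I would set the vectors. Enumerating $\B_k$ and letting $\{u(\sigma,i):(\sigma,i)\in\B_k\}$ be the standard orthonormal basis of $\ell_2^{N_k}$ in that enumeration, define
\[ e(\sigma,i) := \tfrac12\, T_k\, u(\sigma,i), \qquad (\sigma,i)\in\B_k. \]
The inclusion $\{e(\sigma,i)\}\subseteq \mathrm{span}\{b_j: j\in[m_k,m_{k+1})\}$ is immediate, since each $e(\sigma,i)$ lies in $V_k\subseteq W_k$. For the norm estimate, given scalars $\lambda(\sigma,i)$ I would compute
\[ \left\|\sum_{(\sigma,i)\in\B_k}\lambda(\sigma,i)\,e(\sigma,i)\right\| = \tfrac12\left\|T_k\Big(\sum_{(\sigma,i)\in\B_k}\lambda(\sigma,i)\,u(\sigma,i)\Big)\right\|, \]
and then apply the two-sided bound on $T_k$ together with $\big\|\sum\lambda(\sigma,i)\,u(\sigma,i)\big\|_2=\sqrt{\sum\lambda(\sigma,i)^2}$. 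The lower estimate for $T_k$ gives the value $\ge \tfrac12\sqrt{\sum\lambda(\sigma,i)^2}$ and the upper estimate gives $\le \sqrt{\sum\lambda(\sigma,i)^2}$, which are exactly the two inequalities claimed.

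The only genuine input is Dvoretzky's theorem, which I treat as a black box; once it is in hand, everything else is routine. The two points deserving care are the normalization of $T_k$, chosen so that the distortion $1+\e=2$ splits into the pair of constants $\tfrac12$ and $1$ appearing in the statement, and the requirement that each block length $m_{k+1}-m_k$ exceed the Dvoretzky dimension $d(N_k,1)$. Since that dimension is finite for every fixed $N_k$, this constraint is no obstacle and simply prescribes how rapidly $\{m_k\}$ must increase.
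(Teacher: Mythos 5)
Your proposal is correct and follows exactly the route the paper intends: the paper gives no proof of this lemma beyond the remark that it ``follows from Dvoretzky's theorem,'' and your argument is the standard fleshing-out of that remark, choosing the block lengths $m_{k+1}-m_k$ large enough to apply the finite-dimensional Dvoretzky theorem with distortion $2$ and rescaling the resulting isomorphism so the constants $1$ and $\tfrac12$ come out as stated. The details (linear independence of the $b_j$, the normalization of $T_k$, and the two-sided estimate) are all handled correctly.
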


 As before, we let
 \[f= \sum_{(\sigma,i) \in \B} c(\sigma,i) \cdot \frac{1}{\lambda(A(\sigma, i))} \cdot e(\sigma, i)\cdot\chi_{A(\sigma, i)},\]
 and for $\tau \in \2 ^{n_k}$, let
 \[f_{|\tau} = \sum_{(\sigma,i) \in \B_, \ \sigma \in [\tau] } c(\sigma,j) \cdot \frac{1}{\lambda(A(\sigma, j))} \cdot e(\sigma, j)\cdot\chi_{A(\sigma, j)}.\]

\begin{lem}\label{general estimates} Let $f$ and $f_{|\tau}$ be as above with $\tau \in \2 ^{n_k}$.
\begin{enumerate}
\item If  $\sum_{j=0}^{\infty}\sqrt{\sum_{(\sigma,i) \in \B_j} c(\sigma , i) ^2 } < \infty $,
then $f\in \pettis$.
\item
\[ \frac{1}{2K}  \sqrt{ \sum_{(\sigma,i) \in \B_k, \ \sigma \in [\tau] } c(\sigma , i) ^2 }  \le \left \| \int _{I_{\tau}} f_{|\tau} \right\|  \le \sum_{j=k}^{\infty} \sqrt{ \sum_{(\sigma,i) \in \B_j , \ \sigma \in [\tau] } c(\sigma , i) ^2 }   \]
\item
\[\left \| \int _{I_{\tau}} f \right \|  \ge  \frac{1}{K} \left \| \int _{I_{\tau}} f_{|\tau} \right \|.\]

\end{enumerate}

\end{lem}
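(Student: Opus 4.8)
The plan is to reduce all three parts to the explicit formula
\[ \int_I f = \sum_{(\sigma,i) \in \B} c(\sigma,i)\, e(\sigma,i)\, \frac{\lambda(A(\sigma,i)\cap I)}{\lambda(A(\sigma,i))}, \]
valid for any subinterval $I$ once $f \in \pettis$ (this is Lemma~\ref{basic1} together with the computation in the proof of Lemma~\ref{basic}). The two structural tools I would lean on are Lemma~\ref{choosingen}, which controls each block $\B_j$ like a Euclidean space up to the factor $2$, and Lemma~\ref{basic sequence}, which lets me pass from the whole basic-sequence expansion to any consecutive segment at the cost of the factor $1/K$. The one geometric observation that makes everything fit is that, because $\tau \in \2^{n_k}$, every $\sigma \in [\tau]$ has $|\sigma| \ge n_k$ and hence $e(\sigma,i) \in \mathrm{span}\{b_m : m \ge m_k\}$, whereas every $\sigma$ of length $< n_k$ gives $e(\sigma,i) \in \mathrm{span}\{b_m : m < m_k\}$; thus the dyadic levels are aligned with consecutive segments of $\{b_m\}$.

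For part (1) I would verify the Cauchy criterion for unconditional convergence of $\sum_{(\sigma,i)} c(\sigma,i) e(\sigma,i)$, which by Lemma~\ref{basic1}(2) is exactly Pettis integrability. Writing $a_j = \sqrt{\sum_{(\sigma,i)\in\B_j} c(\sigma,i)^2}$, the hypothesis is $\sum_j a_j < \infty$. Given $\varepsilon > 0$, I pick $N$ with $\sum_{j \ge N} a_j < \varepsilon$ and set $F_0 = \bigcup_{j<N}\B_j$, which is finite. For any finite $F \subseteq \B$ disjoint from $F_0$ I split $F$ along blocks and apply the upper estimate of Lemma~\ref{choosingen} on each block to get $\|\sum_{(\sigma,i)\in F} c(\sigma,i)e(\sigma,i)\| \le \sum_{j\ge N} a_j < \varepsilon$. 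The same block bound shows $f_{|\tau} \in \pettis$ whenever $f$ is, so the integrals in (2) and (3) are meaningful.

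For part (2) I first note that $\sigma \in [\tau]$ forces $A(\sigma,i) \subseteq I_\sigma \subseteq I_\tau$, so the formula above collapses to $\int_{I_\tau} f_{|\tau} = \sum_{(\sigma,i)\in\B,\,\sigma\in[\tau]} c(\sigma,i)e(\sigma,i) = \sum_{j\ge k} v_j$, where $v_j = \sum_{(\sigma,i)\in\B_j,\,\sigma\in[\tau]} c(\sigma,i)e(\sigma,i)$ and $v_j \in \mathrm{span}\{b_m : m\in[m_j,m_{j+1})\}$. The upper bound follows from the triangle inequality and the upper estimate of Lemma~\ref{choosingen}: $\|\sum_{j\ge k} v_j\| \le \sum_{j\ge k}\|v_j\| \le \sum_{j\ge k}\sqrt{\sum_{(\sigma,i)\in\B_j,\,\sigma\in[\tau]} c(\sigma,i)^2}$. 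For the lower bound I view $v_k$ as the segment of the expansion $\sum_{j\ge k} v_j$ indexed by $[m_k, m_{k+1})$ and apply Lemma~\ref{basic sequence} to obtain $\|\int_{I_\tau} f_{|\tau}\| \ge \frac1K \|v_k\|$; the lower estimate of Lemma~\ref{choosingen} then gives $\|v_k\| \ge \frac12 \sqrt{\sum_{(\sigma,i)\in\B_k,\,\sigma\in[\tau]} c(\sigma,i)^2}$, and the two combine to $\frac{1}{2K}\sqrt{\cdots}$.

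For part (3) I decompose $\int_{I_\tau} f = R + \int_{I_\tau} f_{|\tau}$. Terms with $\sigma \notin [\tau]$ and $|\sigma| \ge n_k$ vanish because $I_\sigma \cap I_\tau = \emptyset$, and terms with $|\sigma| < n_k$ survive only when $\sigma$ is a proper prefix of $\tau$; these make up $R$ and, by the alignment observation, lie in $\mathrm{span}\{b_m : m < m_k\}$, while $\int_{I_\tau}f_{|\tau} \in \mathrm{span}\{b_m : m \ge m_k\}$. Hence in the $\{b_m\}$-expansion of $\int_{I_\tau} f$ the vector $\int_{I_\tau} f_{|\tau}$ is exactly the tail segment from $m_k$ to $\infty$, and Lemma~\ref{basic sequence} (with $l = \infty$) gives $\|\int_{I_\tau}f\| \ge \frac1K \|\int_{I_\tau} f_{|\tau}\|$. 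I expect the main obstacle to be the bookkeeping in part (3): checking precisely which summands vanish, survive, or collapse, and confirming that the surviving prefix terms and the $f_{|\tau}$-part occupy complementary, consecutive segments of the basic sequence, so that Lemma~\ref{basic sequence} applies cleanly. Everything else is a routine combination of the two estimates.
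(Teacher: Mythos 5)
Your proposal is correct and follows essentially the same route as the paper's proof: collapse $\int_{I_\tau}f$ and $\int_{I_\tau}f_{|\tau}$ to the sums $\sum c(\sigma,i)e(\sigma,i)$ (with fractional weights only on the prefix terms), use Lemma~\ref{choosingen} for the within-block $\ell_2$-type control, and use Lemma~\ref{basic sequence} to project onto the block $\B_k$ in (2) and onto the tail segment $\{b_m : m \ge m_k\}$ in (3). The only cosmetic difference is in part (1), where you verify unconditional convergence via the Cauchy criterion on finite sets disjoint from $\bigcup_{j<N}\B_j$, while the paper bounds arbitrary sign-changed series blockwise --- the underlying estimate is identical.
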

\begin{proof} To see (1)
recall that $f \in \pettis$ provided that $\sum_{(\sigma,i) \in \B} c(\sigma,i) \ e(\sigma, i) $
converges unconditionally.  To see this, let $\epsilon(\sigma, i) \in \{-1,1 \}$ for all $(\sigma,i) \in \B$.
Then,

 \begin{eqnarray}
\label{eq: gtriangle 1} \left \| \sum_{(\sigma,i) \in \B} \epsilon(\sigma, i) c(\sigma,i) \ e(\sigma, i) \right \| & \le &
\sum_{j=0}^{\infty}  \left \| \sum_{(\sigma,i) \in \B_j} \epsilon(\sigma, i) c(\sigma,i) \ e(\sigma, i) \right \| \\
\label{eq: dvoretzky 1} & \le & \sum_{j=0}^{\infty}   \sqrt {\sum _{(\sigma, i) \in \B_j}   c(\sigma,i) ^2} \\
& < & \infty.
 \end{eqnarray}
Note that inequality (\ref{eq: gtriangle 1}) is simply the triangle inequality and inequality (\ref{eq: dvoretzky 1}) follows from Lemma~\ref{choosingen}.

Let us now verify (2). To obtain the lower bound,  we observe that
\begin{eqnarray} \left \|  \int _{I_{\tau}} f_{|\tau} \right \|  & = &  \left \| \sum_{(\sigma,i) \in \B,\  \sigma \in [\tau]} c(\sigma,i) \cdot \frac{\lambda(A(\sigma, i)\cap I_{\tau})}{\lambda(A(\sigma, i))} \cdot e(\sigma, i) \right \|\\
& =&  \left \| \sum_{(\sigma,i) \in \B,\  \sigma \in [\tau]} c(\sigma,i)  \cdot e(\sigma, i) \right \| \\
& =& \left \| \sum_{j=k}^{\infty}\  \sum_{(\sigma,i) \in \B_j,\  \sigma \in [\tau]} c(\sigma,i)  \cdot e(\sigma, i) \right \|\\
\label{eq: basic 0} & \ge & \frac{1}{K} \left \|  \sum_{(\sigma,i) \in \B_k,\  \sigma \in [\tau]} c(\sigma,i)  \cdot e(\sigma, i) \right \|\\
\label{eq: dvoretzky 1.1} & \ge & \frac{1}{2K}  \sqrt{ \sum_{(\sigma,i) \in \B_k, \ \sigma \in [\tau] } c(\sigma , i) ^2 }
\end{eqnarray}
Inequality (\ref{eq: basic 0}) follows from the fashion in which $e(\sigma,i)$'s were chosen and  Lemma~\ref{basic sequence}. Meanwhile Inequality (\ref{eq: dvoretzky 1.1}) follows from Lemma~\ref{choosingen}.
To obtain the upper bound, we observe that
\begin{eqnarray} \left \|  \int _{I_{\tau}} f_{|\tau} \right \|  & = & \left \| \sum_{j=k}^{\infty}\  \sum_{(\sigma,i) \in \B_j,\  \sigma \in [\tau]} c(\sigma,i)  \cdot e(\sigma, i) \right \|\\
\label{eq: gtriangle+} & \le  & \sum_{j=k}^{\infty} \left \|  \sum_{(\sigma,i) \in \B_j,\  \sigma \in [\tau]} c(\sigma,i)  \cdot e(\sigma, i) \right \|\\
\label{eq: dvoretzky 1.2} & \le & \sum_{j=k}^{\infty}  \sqrt{ \sum_{(\sigma,i) \in \B_j, \ \sigma \in [\tau] } c(\sigma , i) ^2 }.
\end{eqnarray}
Inequality (\ref{eq: gtriangle+}) simply is the triangle inequality. Inequality (\ref{eq: dvoretzky 1.2}) follows from Lemma~\ref{choosingen}.

Finally, let us now prove (3).  We note that
\begin{eqnarray*} \left \|  \int _{I_{\tau}} f  \right \| & = &  \left \| \sum_{(\sigma,i) \in \B, \ \tau \in [\sigma] } c(\sigma,i) \cdot \frac{\lambda(A(\sigma, i)\cap I_{\tau})}{\lambda(A(\sigma, i))} \cdot e(\sigma, i) +  \sum_{(\sigma,i) \in \B, \  \sigma \in [\tau ]} c(\sigma,i)  \cdot e(\sigma, i)  \right \| \\
& = &  \left \| \sum_{j=0}^{k-1} \ \sum_{(\sigma,i) \in \B_j, \ \tau \in [\sigma] } c(\sigma,i) \cdot \frac{\lambda(A(\sigma, i)\cap I_{\tau})}{\lambda(A(\sigma, i))} \cdot e(\sigma, i) +   \sum_{j=k}^{\infty} \ \sum_{(\sigma,i) \in \B_j, \  \sigma \in [\tau ]} c(\sigma,i)  \cdot e(\sigma, i)  \right \|\\
& \ge & \frac{1}{K} \left \|  \sum_{j=k}^{\infty} \  \sum_{(\sigma,i) \in \B_j, \  \sigma \in [\tau ]} c(\sigma,i)  \cdot e(\sigma, i)  \right \| \\
& = &  \frac{1}{K}\left \|  \int _{I_{\tau}} f_{|\tau}  \right \|
\end{eqnarray*}
Again in the inequalities above we make a use of the fashion in which $e(\sigma,i)$'s where chosen and Lemma~\ref{basic sequence}.

\end{proof}
Recall that
\[ \D = \{ \n| \n: \nat \rightarrow \nat, \n (i) \le i\}.\\
\]
and
\[ \fn = \sum_{k=0}^{\infty} \ \sum_{\sigma \in \2^{k}} \frac{1}{(k+1)2^{k/2}} \cdot \frac{1}{\lambda(A(\sigma, \n(k)))} \cdot e(\sigma, \n(k))\cdot\chi_{A(\sigma, \n(k))}. \]
For the sake of notational convenience, for each $k \in \nat$, let
 \[ u_k \equiv  \sqrt{\sum_{i=n_k}^{n_{k+1}-1}   \frac{1}{ (i+1)^2 }   }.\]
Henceforth, we assume that $\{n_k\}$ is rapidly increasing so that
\[ u_{k+1} < \frac{1}{3}u_k.\]

\begin{lem}\label{particular estimates} For each $\n \in \D$, we have that $\fn$ is Pettis integrable and for $\tau \in \2 ^{n_k}$, we have that
\[  \frac{1}{2K} 2^{-\frac{n_k}{2}}u_k   \le \left  \| \int_{I_{\tau}}f(\n)_{|\tau} \right \| \le  \frac{3}{2} 2^{-\frac{n_k}{2}}u_k.
\]
\end{lem}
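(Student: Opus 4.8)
The plan is to recognize $\fn$ as a basic function in the sense of this section and then simply feed its coefficients into Lemma~\ref{general estimates}. For $\fn$ the coefficient $c(\sigma,i)$ vanishes unless $i=\n(|\sigma|)$, in which case $c(\sigma,\n(|\sigma|))=\frac{1}{(|\sigma|+1)2^{|\sigma|/2}}$. Consequently every estimate in Lemma~\ref{general estimates} is controlled by the single quantity $S_j:=\sum_{(\sigma,i)\in\B_j,\ \sigma\in[\tau]}c(\sigma,i)^2$, and the entire lemma reduces to evaluating $S_j$ in closed form and then summing geometrically.

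First I would compute $S_j$ for $\tau\in\2^{n_k}$ and $j\ge k$ by a direct counting argument over the tree. For a fixed length $m$ with $n_j\le m<n_{j+1}$, the number of $\sigma\in\2^m$ extending $\tau$ is $2^{m-n_k}$, and for each such $\sigma$ the only surviving term has $c(\sigma,\n(m))^2=\frac{1}{(m+1)^2 2^m}$. Multiplying and summing $m$ across the block $[n_j,n_{j+1})$ collapses the factors of $2$ and gives
\[ S_j=2^{-n_k}\sum_{m=n_j}^{n_{j+1}-1}\frac{1}{(m+1)^2}=2^{-n_k}u_j^2, \qquad\text{so}\qquad \sqrt{S_j}=2^{-n_k/2}u_j. \]
Plugging $j=k$ into the lower bound of Lemma~\ref{general estimates}(2) immediately yields $\frac{1}{2K}2^{-n_k/2}u_k\le\|\int_{I_\tau}f(\n)_{|\tau}\|$, which is exactly the claimed left-hand inequality; here I use that the lower bound only retains the $k^{\text{th}}$ block.

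For the upper bound I would insert $\sqrt{S_j}=2^{-n_k/2}u_j$ into the right-hand side of Lemma~\ref{general estimates}(2), obtaining $\|\int_{I_\tau}f(\n)_{|\tau}\|\le 2^{-n_k/2}\sum_{j=k}^{\infty}u_j$. The standing hypothesis $u_{k+1}<\frac13 u_k$ forces $u_j\le 3^{-(j-k)}u_k$, whence $\sum_{j=k}^{\infty}u_j\le u_k\sum_{l=0}^{\infty}3^{-l}=\frac32 u_k$, giving the claimed bound $\frac32 2^{-n_k/2}u_k$. Pettis integrability follows from the same mechanism applied to Lemma~\ref{general estimates}(1): taking $\tau$ empty (so $n_k=0$) gives $\sqrt{\sum_{(\sigma,i)\in\B_j}c(\sigma,i)^2}=u_j$, and $\sum_{j=0}^{\infty}u_j\le\frac32 u_0<\infty$. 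The only real point requiring care—the rest being bookkeeping—is controlling the infinite sum in the upper estimate; this is precisely why the blocks $\{n_k\}$ are chosen rapidly increasing so that $u_{k+1}<\frac13 u_k$, ensuring the geometric decay across blocks.
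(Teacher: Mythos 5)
Your proposal is correct and follows essentially the same route as the paper: compute the block sums $\sum_{(\sigma,i)\in\B_j,\,\sigma\in[\tau]}c(\sigma,i)^2 = 2^{-n_k}u_j^2$ by counting the $2^{m-n_k}$ extensions of $\tau$ at each level $m$, feed these into both bounds of Lemma~\ref{general estimates}(2), and sum the geometric series forced by $u_{j+1}<\frac13 u_j$ to get the factor $\frac32$. If anything, your write-up is slightly tighter than the paper's (whose displayed upper estimate contains an index slip, summing over $[n_k,n_{k+1})$ where $[n_j,n_{j+1})$ is meant), and you make explicit the Pettis integrability step via Lemma~\ref{general estimates}(1), which the paper leaves implicit at this point.
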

\begin{proof} Let us compute the lower estimate first. By Lemma~\ref{general estimates} we have that
\begin{eqnarray*}
\left \| \int_{I_{\tau}}f(\n)_{|\tau} \right \| & \ge & \frac{1}{2K}  \sqrt{ \sum_{i=n_k}^{n_{k+1}-1}2^{i-n_k}  \left [ \frac{1}{{(i+1)2^{i/2}}} \right ] ^2}\\
& = & \frac{1}{2K} 2^{-\frac{n_k}{2}} \sqrt{ \sum_{i=n_k}^{n_{k+1}-1}  \left [ \frac{1}{{(i+1)}} \right ] ^2}\\
& = &  \frac{1}{2K} 2^{-\frac{n_k}{2}} u_k.
\end{eqnarray*}
Again, using the upper estimate in Lemma~\ref{general estimates} we have that

\begin{eqnarray*}
\left \| \int_{I_{\tau}}f(\n)_{|\tau} \right \| & \le  & \ \sum_{j=k}^{\infty} \sqrt{ \sum_{i=n_k}^{n_{k+1}-1}2^{i-n_k}  \left [ \frac{1}{{(i+1)2^{i/2}}} \right ] ^2}\\
& \le &  2^{-n_k/2} \sum_{j=k}^{\infty} \sqrt{ \sum_{i=n_k}^{n_{k+1}-1}  \left [ \frac{1}{(i+1)^2} \right ] ^2}\\
& \le & 2^{-n_k/2} \sum_{j=k}^{\infty} u_j\\
& \le & 2^{-n_k/2} \sum_{j=k}^{\infty} u_k  \left (\frac{1}{3} \right )^{j-k}\\
& \le & \frac{3}{2} 2^{-n_k/2}u_k.
\end{eqnarray*}
\end{proof}
\begin{lem}\label{infinite sum general} Let $\{\lambda_i\}$ be a sequence in $\ell_1$ and $\{f_i\}$ be such
that $f_i = f(\n_i)$ for some $\n_i \in \D$. Then,
\[ f= \sum_{i=1}^{\infty} \lambda_i f_i \in \pettis \ \ \ \  \  \mbox{  and   } \ \ \ \ \ \forall \tau \in \finseq, \ \ \   \left  \|  \int_{I _{\tau}} f
 \right \| \le  \sum_{i=1}^{\infty}  | \lambda_i | \left \| \int_{I _{\tau}} f_{i}\right \|.\]
\end{lem}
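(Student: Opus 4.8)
The plan is to follow the two-step structure of the proof of Lemma~\ref{infinite sum}, replacing the exact orthogonality used there by the per-block estimate of Lemma~\ref{choosingen}. First I would record that $f=\sum_{i=1}^\infty \lambda_i f_i$ is a basic function: collecting the contributions of the various $f_i$ at each pair $(\sigma,j)$ with $\sigma\in\2^k$ gives, exactly as in the $\ell_2$ computation,
\[ f=\sum_{k=0}^\infty \sum_{\sigma\in\2^k}\sum_{j=0}^k \frac{1}{(k+1)2^{k/2}}\cdot\frac{d(k,j)}{\lambda(A(\sigma,j))}\cdot e(\sigma,j)\cdot\chi_{A(\sigma,j)}, \]
where $d(k,j)=\sum_{\{i:\n_i(k)=j\}}\lambda_i$, so the coefficient of $f$ at $(\sigma,j)$ with $\sigma\in\2^k$ is $c(\sigma,j)=d(k,j)/((k+1)2^{k/2})$. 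Since the sets $\{i:\n_i(k)=j\}$ are pairwise disjoint in $j$, one has $\sum_{j=0}^k d(k,j)^2\le(\sum_i|\lambda_i|)^2$, and summing the squared coefficients over a block $\B_m$ yields
\[ \sum_{(\sigma,i)\in\B_m} c(\sigma,i)^2=\sum_{k=n_m}^{n_{m+1}-1}\frac{1}{(k+1)^2}\sum_{j=0}^k d(k,j)^2\le\Big(\sum_i|\lambda_i|\Big)^2 u_m^2. \]
Because $u_{m+1}<\tfrac13 u_m$, the quantities $\sqrt{\sum_{\B_m}c(\sigma,i)^2}\le(\sum_i|\lambda_i|)u_m$ are summable, so Lemma~\ref{general estimates}(1) gives $f\in\pettis$.

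For the norm inequality the key device is a uniform bound on the integral over $I_\tau$ of an arbitrary basic function $g$ with coefficients $c_g$. Writing $\int_{I_\tau}g=\sum_{(\sigma,i)\in\B}c_g(\sigma,i)\,\frac{\lambda(A(\sigma,i)\cap I_\tau)}{\lambda(A(\sigma,i))}\,e(\sigma,i)$ (valid whenever $g\in\pettis$, by the same formula used in Lemma~\ref{general estimates}), splitting the sum over the blocks $\B_m$, applying the triangle inequality between blocks and then the upper estimate of Lemma~\ref{choosingen} inside each block, and using $0\le\lambda(A(\sigma,i)\cap I_\tau)/\lambda(A(\sigma,i))\le1$, I obtain
\[ \Big\|\int_{I_\tau}g\Big\|\le\sum_{m=0}^\infty\sqrt{\sum_{(\sigma,i)\in\B_m}c_g(\sigma,i)^2}. \]
Applied to the individual $f_i$, whose block coefficients satisfy $\sum_{(\sigma,i)\in\B_m}c_g(\sigma,i)^2=u_m^2$, this yields $\|\int_{I_\tau}f_i\|\le\sum_m u_m\le\tfrac32 u_0$, so the series $\sum_i|\lambda_i|\|\int_{I_\tau}f_i\|$ converges. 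Applied to the tail $g=f-\sum_{i=1}^N\lambda_i f_i=\sum_{i>N}\lambda_i f_i$, whose block coefficients are controlled exactly as in the first paragraph with $\sum_i|\lambda_i|$ replaced by $\sum_{i>N}|\lambda_i|$, it gives $\|\int_{I_\tau}(f-\sum_{i=1}^N\lambda_i f_i)\|\le\tfrac32 u_0\sum_{i>N}|\lambda_i|\to0$.

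Finally I would combine the finite linearity of the Pettis integral with this tail bound. For each $N$,
\[ \Big\|\int_{I_\tau}f\Big\|\le\Big\|\int_{I_\tau}\sum_{i=1}^N\lambda_i f_i\Big\|+\Big\|\int_{I_\tau}\Big(f-\sum_{i=1}^N\lambda_i f_i\Big)\Big\|\le\sum_{i=1}^N|\lambda_i|\,\Big\|\int_{I_\tau}f_i\Big\|+\tfrac32 u_0\sum_{i>N}|\lambda_i|, \]
using linearity and the triangle inequality on the finite sum; letting $N\to\infty$ makes the second term vanish and the first converge to $\sum_{i=1}^\infty|\lambda_i|\|\int_{I_\tau}f_i\|$, which is the claim. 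The main obstacle is precisely the per-block bound on $\|\int_{I_\tau}g\|$: unlike the $\ell_2$ case there is no genuine orthogonality across the whole tree, so one cannot sum squares over all of $\B$ at once and must instead pay a triangle inequality \emph{between} blocks while absorbing the geometry of the $e(\sigma,i)$ \emph{within} each block through Dvoretzky's theorem (Lemma~\ref{choosingen}). This is exactly what forces the rapid growth condition $u_{m+1}<\tfrac13 u_m$, whose role is to keep the resulting inter-block series finite.
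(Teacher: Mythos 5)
Your proof is correct, and its first half is exactly the paper's argument: the same rewriting of $f$ as a basic function with coefficients $c(\sigma,j)=d(|\sigma|,j)/\bigl((|\sigma|+1)2^{|\sigma|/2}\bigr)$, the same disjointness bound $\sum_{j=0}^k d(k,j)^2\le\bigl(\sum_i|\lambda_i|\bigr)^2$, and the same appeal to Lemma~\ref{general estimates}(1) together with $\sum_m u_m\le\frac{3}{2}u_0$. Where you genuinely diverge is the norm inequality, which the paper dispatches in a single sentence (``the second part simply follows from the general theory of integration'') and which you prove in full. Your key device --- the bound $\bigl\|\int_{I_\tau}g\bigr\|\le\sum_{m}\sqrt{\sum_{(\sigma,i)\in\B_m}c_g(\sigma,i)^2}$ for an arbitrary Pettis integrable basic function $g$, obtained from the triangle inequality across blocks, the upper estimate of Lemma~\ref{choosingen} within each block, and $\lambda(A(\sigma,i)\cap I_\tau)/\lambda(A(\sigma,i))\le 1$ --- is a mild strengthening of the upper half of Lemma~\ref{general estimates}(2), which bounds only $\|\int_{I_\tau}g_{|\tau}\|$; you absorb the partially overlapping terms via the ratio bound. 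Applying this to the tails $\sum_{i>N}\lambda_i f_i$ and letting $N\to\infty$ justifies precisely the step the paper's phrase glosses over, namely that $\int_{I_\tau}f=\sum_i\lambda_i\int_{I_\tau}f_i$, i.e.\ that integration over $I_\tau$ commutes with the $\ell_1$-sum; it also shows that the right-hand side of the inequality is finite, which the paper never remarks on. What the paper's appeal to general theory buys is brevity; what your argument buys is a self-contained, quantitative proof that makes explicit where $\{\lambda_i\}\in\ell_1$ and the lacunarity condition $u_{m+1}<\frac{1}{3}u_m$ enter the second assertion, at the modest cost of re-deriving an integral formula and a block estimate already implicit in Lemma~\ref{general estimates}.
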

\begin{proof} We only need to show that $f \in \pettis$ as the second part simply follows from the general theory of integration.
  We first note that
\[ f = \sum_{k=0}^{\infty} \sum_{\sigma \in \2 ^k} \sum_{j=0}^k  \frac{1}{(k+1)2^{k/2}} \cdot \frac{d(k, j)}{\lambda(A(\sigma, j))} \cdot e(\sigma, j)\cdot\chi_{A(\sigma, j)},
\]
where $d(k, j) = \sum_{\{i: \n_i(k) =j\}}\lambda_i$.
Hence, $ | d(k,j)| \le \sum_{\{i: \n_i(k)  =j\}} | \lambda_i |$.
Since $j \neq j'$ implies that $\{i: \n_i(k)= j\} \cap \{i: \n_i(k) =j'\} = \emptyset$, we have that

\begin{eqnarray}
\sum_{j=0}^{k} |d(k,j)| ^2\le \sum_{j=0}^{k} \left (  \sum_{\{i: \n_i(k) =j\}} | \lambda_i | \right ) ^2 \le  \left ( \sum_{j=0}^{\infty} | \lambda_j  | \right ) ^2. \nonumber
\end{eqnarray}
Let us rewrite $f$. For $(\sigma, j) \in \B$, let $c(\sigma, j) = \frac{1}{(|\sigma|+1)2^{|\sigma|/2}} \cdot d(|\sigma|, j)$.
Then
\[f= \sum_{(\sigma,j) \in \B} c(\sigma,j) \cdot \frac{1}{\lambda(A(\sigma, j))} \cdot e(\sigma, j)\cdot\chi_{A(\sigma, j)}.
\]
By (1) of Lemma~\ref{general estimates}, we have that $f$ is Pettis integrable provided that
$\sum_{k=0}^{\infty}\sqrt{\sum_{(\sigma,j) \in \B_k} c(\sigma , j) ^2 } < \infty$.
Indeed,
\begin{eqnarray}
\sum_{k=0}^{\infty}\sqrt{\sum_{(\sigma,j) \in \B_k} c(\sigma , j) ^2 }
& = & \sum_{k=0}^{\infty} \sqrt{\sum_{i=n_k}^{n_{k+1}-1} \sum_{\sigma \in \2^i}  \sum_{j=0}^i \frac{1}{2^i (i+1)^2 } d(i,j) ^2 } \nonumber \\
& \le  & \sum_{k=0}^{\infty} \sqrt{\sum_{i=n_k}^{n_{k+1}-1} \sum_{\sigma \in \2^i}  \frac{1}{2^i (i+1)^2 }  \left ( \sum_{j=1}^{\infty} | \lambda_j  | \right ) ^2 } \nonumber \\
& = & \sum_{k=0}^{\infty} \sqrt{\sum_{i=n_k}^{n_{k+1}-1}   \frac{1}{ (i+1)^2 }  \left ( \sum_{j=1}^{\infty} | \lambda_j  | \right ) ^2 }\nonumber\\
& = & \left (  \sum_{j=1}^{\infty} | \lambda_j  | \right ) \sum_{k=0}^{\infty}   \sqrt{\sum_{i=n_k}^{n_{k+1}-1}   \frac{1}{ (i+1)^2 }   } \nonumber \\
& =& \sum_{k=0}^{\infty} u_k \nonumber \\
 & < & \frac{3}{2} u_0 < \infty \nonumber .
\end{eqnarray}
\end{proof}

\begin{proof}({\bf of Theorem~\ref{mainthm} for $X$ arbitrary infinite dimensional Banach space.})

As before let  $\{ \n_t \}$, $0 < t < 1$, be a subfamily of $\D$ that satisfies the following condition:
\[s, t \in (0,1) \  \& \ s \neq t \implies  \{k: \n_s (k) = \n_{t} (k) \} \text { is finite. }\]
Let ${\mathcal V} = \{f_{\n_t}: t \in (0,1) \}$. We will now show  that ${\mathcal V}$ has the desired properties.

It is clear that ${\mathcal V}$ has the cardinality that of the continuum. As before we have that  ${\mathcal V}$ is linearly independent. That ${\mathcal V}$
satisfies conclusion $(3)$ of the theorem follows from Lemma~\ref{infinite sum general}.

Finally, to verify conclusion (4) of the theorem, let us show that if $f$ is not the zero function,
then  for all $x \in [0,1]$, we have
\[ \limsup_{h\rightarrow 0} \left \| \frac{F(x+h)-F(x)}{h} \right \|= \infty, \]
where $F$ is the primitive of $f$.
As $f$ is not the zero function, we may assume, by renumeration,  that all of the $\{f_i\}$'s are distinct
and $\lambda_1 \neq 0$.   Moreover, we lose no generality by assuming
that $\lambda_1 =1$. Let $i_0$ be such that $\sum_{i=i_0}^{\infty} | \lambda_i| <  \frac{1}{8K}$. Let $t_i$ be such that $f_i = f_{\n_{t_i}}$. Let $M >0$.
By our choice of $\{\n_{t_i}\}$, we have that
there exists a positive integer $l$ such that for all $k \ge l$, $\sigma \in \2 ^k$ and $1\le i < i' \le i_0$, we have
that $\n_{t_i}(\sigma) \neq \n_{t_{i'}} (\sigma)$.  Choose $k$ so large so that $k > l$ and $\frac{1}{4K^2}2^{\frac{n_k}{2}} u_k > M$. Let $\tau \in \2 ^{n_k}$ be such that $x \in I_{\tau}$. We wish to show that \[\frac{\left \|\int_{I_{\tau}}f \right \| }{|I_{\tau}|}> M. \]

By (3) of Lemma~\ref{general estimates}, we have that $ \left \|\int_{I_{\tau}}f \right \|  \ge \frac{1}{K}\left \|\int_{I_{\tau}}f_{|\tau} \right \| $. Hence, it suffices to show that
\[\frac{\left \|\int_{I_{\tau}}f_{|\tau} \right \| }{|I_{\tau}|}> K M.\]
We let
\[
\left \|\int_{I_{\tau}}f _{|\tau}\right \| = \left \| \int _{I_{\tau}} \left (  \sum_{i=1}^{\infty} \lambda_i f_{i}  \right ) _{|\tau} \right \| \ge  L_1 - L_2,\]
 where
\[ L_1 = \left \| \int _{I_{\tau}} \left (  \sum_{i=1}^{i_0}  \lambda_i f_{i}  \right ) _{|\tau}  \right \|, \ \ \ \
L_2= \left \| \int _{I_{\tau}} \left ( \sum_{i=i_0+1}^{\infty}  \lambda_i f_{i}  \right ) _{|\tau}\right \|.
\]
Let us write
\[\sum_{i=1}^{i_0} \lambda_i f_{i} = \sum_{(\sigma,j) \in \B}c(\sigma,j) \cdot d(\sigma ,j) \cdot \frac{1}{\lambda(A(\sigma, j))} \cdot e(\sigma, j)\cdot\chi_{A(\sigma, j)} \mbox { where }
\]

\[c(\sigma, j) = c(\sigma) = \frac{1}{(|\sigma|+1) 2 ^{\frac{|\sigma|}{2}}}, \mbox { and } d(\sigma, j) = \sum_{\{1 \le i \le i_0: \n_i(|\sigma|) =j\}}\lambda_i
\]

\[\mbox { if  } \{1 \le i \le i_0: \n_i(|\sigma|) =j\}\mbox { is not empty}, d(\sigma, j) =0 \mbox { otherwise. }\]
Then, we have that
\begin{eqnarray}
L_1 & =& \left \| \int _{I_{\tau}} \left (  \sum_{i=1}^{i_0}  \lambda_i f_{i}  \right ) _{|\tau}  \right \| \nonumber\\
& =& \left \| \int _{I_{\tau}} \left ( \sum_{(\sigma,j) \in \B} c(\sigma,j) \cdot d(\sigma,j) \cdot \frac{1}{\lambda(A(\sigma, j))} \cdot e(\sigma, j)\cdot\chi_{A(\sigma, j)} \right ) _{|\tau} \right \| \nonumber\\
\label{eq: dvoretzky 4}& \ge & \frac{1}{2K}\sqrt{ \sum_{(\sigma,j) \in \B_k, \ \sigma \in [\tau]} c(\sigma,j)^2 \cdot  d(\sigma,j)^ 2 }\\
\label{eq: smaller} & \ge & \frac{1}{2K}\sqrt{ \sum_{i=n_k}^{n_{k+1}-1} \sum_{\sigma \in \2 ^i, \ \sigma \in [\tau] }  c(\sigma)^2 }\\
& = & \frac{1}{2K}\sqrt{ \sum_{i=n_k}^{n_{k+1}-1} \sum_{\sigma \in \2 ^i, \  \sigma \in [\tau] }  \left ( \frac{1}{(|\sigma|+1) 2 ^{\frac{|\sigma|}{2}} } \right ) ^2 }\\
& = & \frac{1}{2K}  \sqrt{ \sum_{i=n_k}^{n_{k+1}-1}2^{i-n_k}  \left [ \frac{1}{{(i+1)2^{i/2}}} \right ] ^2} = \frac{2^{-\frac{n_k}{2}}}{2K} u_k.
\end{eqnarray}
Let us give some justifications for the inequalities. Inequality (\ref{eq: dvoretzky 4}) follows from (2) of Lemma~\ref{general estimates}. Let us now explain why inequality (\ref{eq: smaller}) holds.
We note that if $1 \le i < i' \le i_0$ and $\sigma \in [\tau]$, then $n_{t_i} (|\sigma|) \neq n_{t_{i'}}(|\sigma|)$.
Hence, $\sigma \in [\tau]$ implies that $d(\sigma, j) =0$ or $\lambda_i$ for some $1 \le i \le i_0$. Moreover, for
every $\sigma \in [\tau]$ there is a $j$ so that $d(\sigma, j) = \lambda_1=1$. Hence inequality (\ref{eq: smaller}) holds.
The rest of the equalities are basic computations.

We next obtain an upper estimate of $L_2$.
\begin{eqnarray}
L_2 & =&\left \| \int _{I_{\tau}} \left ( \sum_{i=i_0+1}^{\infty}  \lambda_i f_{i}  \right ) _{|\tau}\right \|\\
& =&  \left \|  \int _{I_{\tau}}\sum_{i=i_0+1}^{\infty} \lambda_i (f_{i})_{|\tau} \right \| \nonumber\\
\label{eq: infinite sum} &  \le & \sum_{i=i_0+1}^{\infty}| \lambda_i | \left \|  \int _{I_{\tau}} (f_{i})_{|\tau} \right \| \\
\label{eq: upper est} & \le & \left (  \sum_{i=i_0+1}^{\infty}| \lambda_i |  \right ) \left (  \frac{3}{2} 2^{-\frac{n_k}{2}}u_k \right )\\
& \le & \frac{1}{8K} \frac{3}{2}2^{-\frac{n_k}{2}}u_k <   \frac{1}{4K}2^{-\frac{n_k}{2}}u_k\nonumber
\end{eqnarray}
We note that inequality (\ref{eq: infinite sum}) follows from Lemma~\ref{infinite sum general} and inequality (\ref{eq: upper est}) follows from Lemma~\ref{particular estimates}.

Putting these estimates together we get that
\[ \left \|\int_{I_{\tau}}f_{|\tau} \right \| \ge  L_1 - L_2 > \frac{1}{4K} 2^{-\frac{n_k}{2}}u_k \]
This, in turn, implies that
\[ \frac{\left \|\int_{I_{\tau}}f_{|\tau} \right \|}{|I_{\tau}|}>  \frac{ \frac{1}{4K} 2^{-\frac{n_k}{2}}u_k}{2^{-n_k}}   \ge \frac{1}{4K} 2^{\frac{n_k}{2}}u_k  >KM , \]
completing the proof.
\end{proof}

\end{document}